\let\OLDthebibliography\thebibliography
\renewcommand\thebibliography[1]{
  \OLDthebibliography{#1}
  \setlength{\parskip}{3pt}
  \setlength{\itemsep}{0pt plus 0.3ex}
}
\def\numberlikeadb{\global\def\theequation{\thesection.\arabic{equation}}}
\newtheorem{theorem}{Theorem}[section]
\newtheorem{corollary}[theorem]{Corollary}
\newtheorem{remark}[theorem]{Remark}
\begin{document}

\title{Inequalities for integrals of the modified Struve function of the first kind II}
\author{Robert E. Gaunt\footnote{School of Mathematics, The University of Manchester, Manchester M13 9PL, UK}}

\date{\today} 
\maketitle

\vspace{-5mm}

\begin{abstract}Simple inequalities are established for integrals of the type $\int_0^x \mathrm{e}^{-\gamma t} t^{-\nu} \mathbf{L}_\nu(t)\,\mathrm{d}t$, where $x>0$, $0\leq\gamma<1$, $\nu>-\frac{3}{2}$ and $\mathbf{L}_{\nu}(x)$ is the modified Struve function of the first kind. In most cases, these inequalities are tight in certain limits.  As a consequence we deduce a tight double inequality, involving the modified Struve function $\mathbf{L}_{\nu}(x)$, for a generalized hypergeometric function.  
%We also state an open problem that arises from this research. 
\end{abstract}

%In most cases, we show that we obtain the best possible constant or that our bounds are tight in certain limits.

\noindent{{\bf{Keywords:}}} Modified Struve function; inequality; integral

\noindent{{{\bf{AMS 2010 Subject Classification:}}} Primary 33C10; 26D15

\section{Introduction}\label{intro}

In a series of recent papers \cite{gaunt ineq1, gaunt ineq3, gaunt ineq6}, simple lower and upper bounds, involving the modified Bessel function of the first kind $I_\nu(x)$, were obtained for the integrals
\begin{equation}\label{intbes}\int_0^x \mathrm{e}^{-\gamma t} t^{\pm\nu} I_\nu(t)\,\mathrm{d}t,
\end{equation}
where $x>0$, $0\leq\gamma<1$ and $\nu>-\frac{1}{2}$.  For $\gamma\not=0$ there does not exist simple closed form expressions for these integrals. The inequalities of \cite{gaunt ineq1,gaunt ineq3} were needed in the development of Stein's method \cite{stein,chen,np12} for variance-gamma approximation \cite{eichelsbacher, gaunt vg, gaunt vg2}. Although, as they are simple and surprisingly accurate the inequalities may also prove useful in other problems involving modified Bessel functions; see for example, \cite{baricz3} in which inequalities for modified Bessel functions of the first kind were used to obtain lower and upper bounds for integrals involving modified Bessel functions of the first kind.

The  modified Struve function of the first kind, defined for $x\in\mathbb{R}$ and $\nu\in\mathbb{R}$ by
\begin{equation*}\mathbf{L}_\nu(x)=\sum_{k=0}^\infty \frac{\big(\frac{1}{2}x\big)^{\nu+2k+1}}{\Gamma(k+\frac{3}{2})\Gamma(k+\nu+\frac{3}{2})},
\end{equation*}
is closely related to the modified Bessel function $I_\nu(x)$, and either shares or has a close analogue to the properties of $I_\nu(x)$ that were used by \cite{gaunt ineq1,gaunt ineq3,gaunt ineq6} to obtain inequalities for the integrals in (\ref{intbes}).  The function $\mathbf{L}_\nu(x)$ is itself a widely used special function; see a standard reference, such as \cite{olver}, for its basic properties.  It has numerous applications in the applied sciences, including leakage inductance in transformer windings \cite{hw94}, perturbation approximations of lee waves in a stratified flow \cite{mh69}, scattering of plane waves by soft obstacles \cite{s84}; see \cite{bp13} for a list of further application areas. 

It is therefore a natural problem to ask for simple inequalities, involving the modified Struve function of the first kind, for the integrals
\begin{equation}\label{intstruve}\int_0^x \mathrm{e}^{-\gamma t} t^{\nu} \mathbf{L}_\nu(t)\,\mathrm{d}t, \qquad \int_0^x \mathrm{e}^{-\gamma t} t^{-\nu} \mathbf{L}_\nu(t)\,\mathrm{d}t
\end{equation}
where $x>0$, $0\leq\gamma<1$ and $\nu>-\frac{3}{2}$.  

When $\gamma=0$ both integrals in (\ref{intstruve}) can be evaluated exactly, because the modified Struve function $\mathbf{L}_{\nu}(x)$ can be represented as a generalized hypergeometric function.  To see this, recall that the generalized hypergeometric function (see \cite{olver} for this definition and further properties) is defined by
\begin{equation*}{}_pF_q\big(a_1,\ldots,a_p;b_1,\ldots,b_q;x\big)=\sum_{k=0}^\infty \frac{(a_1)_k\cdots(a_p)_k}{(b_1)_k\cdots(b_q)_k}\frac{x^k}{k!},
\end{equation*}
and the Pochhammer symbol is given by $(a)_0=1$ and $(a)_k=a(a+1)(a+2)\cdots(a+k-1)$, $k\geq1$.  Then, for $-\nu-\frac{3}{2}\notin\mathbb{N}$, we have the representation
\begin{equation*}\mathbf{L}_\nu(x)=\frac{x^{\nu+1}}{\sqrt{\pi}2^\nu\Gamma(\nu+\frac{3}{2})} {}_1F_2\bigg(1;\frac{3}{2},\nu+\frac{3}{2};\frac{x^2}{4}\bigg)
\end{equation*}
(see also \cite{bp13} for other representations in terms of the generalized hypergeometric function). A straightforward calculation then yields
\begin{equation}\label{besint6}\int_0^x \frac{ \mathbf{L}_\nu(t)}{t^\nu}\,\mathrm{d}t=\frac{x^{2}}{\sqrt{\pi}2^{\nu+1}\Gamma(\nu+\frac{3}{2})}{}_2F_3\bigg(1,1;\frac{3}{2},2,\nu+\frac{3}{2};\frac{x^2}{4}\bigg),
\end{equation}
with a similar formula available for $\int_0^x t^{\nu} \mathbf{L}_\nu(t)\,\mathrm{d}t$.  When $\gamma\not=0$, there does, however, not exist a closed form formula for the integrals in (\ref{intstruve}).  Moreover, even when $\gamma=0$ the first integral is given in terms of the generalized hypergeometric function.  This provides the motivation for establishing simple bounds, involving the modified Struve function $\mathbf{L}_\nu(x)$, for these integrals.

%In the recent papers \cite{gaunt ineq1} and \cite{gaunt ineq3}, an analogous problem was studied in which simple inequalities for the integrals $\int_0^x \mathrm{e}^{-\gamma t} t^\nu I_\nu(t)\,\mathrm{d}t$ (where $\nu>-\frac{1}{2}$) and $\int_0^x \mathrm{e}^{-\gamma t} t^{\nu+1} I_\nu(t)\,\mathrm{d}t$ (where $\nu>-1$) were obtained, where the modified Bessel function of the first kind is defined by
%\begin{equation*}I_\nu(x)=\big(\tfrac{1}{2}x\big)^\nu\sum_{k=0}^\infty\frac{\big(\frac{1}{2}x\big)^{2k}}{k!\Gamma(k+\nu+1)}.
%\end{equation*}   
%The motivation for that work came from the fact that bounds were required for these integrals involving the modified Bessel function $I_\nu(x)$ in the developement of Stein's method for variance-gamma approximation \cite{gaunt thesis, gaunt vg}, although the inequalities may also prove useful in other problems involving modified Bessel functions; see for example, \cite{baricz3} in which inequalities for modified Bessel functions of the first kind were used to obtain lower and upper bounds for integrals involving modified Bessel functions of the first kind.  Because the inequalities for integrals involving the modified Struve function $\mathbf{L}_\nu(x)$ obtained in this paper are simple and surprisingly accurate they may be of use in similar such applications.

Inequalities were established by \cite{gaunt ineq4} for the first integral in (\ref{intstruve}) by adapting the techniques used by \cite{gaunt ineq1,gaunt ineq3} to bound the related integral involving the modfied Bessel function $I_\nu(x)$.   In this note, we obtain lower and upper bounds for the second integral in (\ref{intstruve}). We proceed in a similar manner to \cite{gaunt ineq4} by adapting the methods used \cite{gaunt ineq6} to bound related integrals involving $I_\nu(x)$, and the inequalities obtained in this note take a similar form to those obtaned by \cite{gaunt ineq6}.  As already noted, the reason for this similarity is because many of the properties of $I_\nu(x)$ that were exploited in the proofs of  \cite{gaunt ineq1,gaunt ineq3,gaunt ineq6} are shared by $\mathbf{L}_\nu(x)$, which we now list.  All these formulas can be found in \cite{olver}, except for the inequality which is given in \cite{bp14}.  Further inequalities for $\mathbf{L}_\nu(x)$ can be found in \cite{bp14,bps17,gaunt ineq5,jn98}, some of which improve on the inequality of \cite{bp14}.

For positive values of $x$ the function $\mathbf{L}_{\nu}(x)$ is positive for $\nu>-\frac{3}{2}$ . The function $\mathbf{L}_{\nu}(x)$ satisfies the recurrence relation and differentiation formula
\begin{align}\label{Iidentity}\mathbf{L}_{\nu -1} (x)- \mathbf{L}_{\nu +1} (x) &= \frac{2\nu}{x} \mathbf{L}_{\nu} (x)+\frac{\big(\frac{1}{2}x\big)^\nu}{\sqrt{\pi}\Gamma(\nu+\frac{3}{2})}, \\
\label{diffone}\frac{\mathrm{d}}{\mathrm{d}x} \bigg(\frac{\mathbf{L}_{\nu} (x)}{x^\nu} \bigg) &= \frac{\mathbf{L}_{\nu +1} (x)}{x^\nu}+\frac{2^{-\nu}}{\sqrt{\pi}\Gamma(\nu+\frac{3}{2})},
\end{align}
and has the following asymptotic properties:
\begin{align}\label{Itend0}\mathbf{L}_{\nu}(x)&\sim \frac{2}{\sqrt{\pi}\Gamma(\nu+\frac{3}{2})}\bigg(\frac{x}{2}\bigg)^{\nu+1}, \quad x \downarrow 0, \: \nu>-\tfrac{3}{2}, \\
\label{Itendinfinity}\mathbf{L}_{\nu}(x)&\sim \frac{\mathrm{e}^{x}}{\sqrt{2\pi x}}, \quad x \rightarrow\infty, \: \nu\in\mathbb{R}.
\end{align}
Let $x > 0$. Then 
\begin{equation}\label{Imon}\mathbf{L}_{\nu} (x) < \mathbf{L}_{\nu - 1} (x), \quad \nu \geq \tfrac{1}{2}.  
\end{equation} 

We end this introduction by noting that \cite{gaunt ineq6} also derived lower and upper bounds for the integral $\int_x^\infty \mathrm{e}^{\gamma t} t^{-\nu} K_\nu(t)\,\mathrm{d}t$, where $x>0$, $\nu>-\frac{1}{2}$, $0\leq\gamma<1$ and $K_\nu(x)$ is a modified Bessel function of the second kind.  Analogously to the problem studied in this note it is natural to ask for bounds for the integral $\int_x^\infty \mathrm{e}^{\gamma t} t^{-\nu} \mathbf{M}_\nu(t)\,\mathrm{d}t$, where $\mathbf{M}_\nu(x)=\mathbf{L}_\nu(x)-I_\nu(x)$ is the modified Struve function of the second kind.  However, the inequalities of \cite{gaunt ineq6} do not have a natural analogue for $\mathbf{M}_\nu(x)$; a discussion as to why this is the case is given in the Introduction of \cite{gaunt ineq4}.

\section{Inequalities for integrals of the modified Struve function of the first kind}\label{sec2}

The following theorem complements the inequalities for the integral $\int_0^x \mathrm{e}^{-\gamma t}t^\nu \mathbf{L}_\nu(t)\,\mathrm{d}t$ that are given in Theorem 2.1 of \cite{gaunt ineq4}.  The inequalities are natural analogues of the inequalities obtained in Theorem 2.5 of \cite{gaunt ineq6} for the related integrals involving the modified Bessel function $I_\nu(x)$.  Before stating the theorem, we introduce the notation
\begin{align*}a_{\nu,n}&=\frac{2\nu+n+1}{\sqrt{\pi}2^{\nu+n+2}(n+2)(\nu+n+1)\Gamma(\nu+n+\frac{5}{2})}, \\
b_{\nu,n}&=\frac{(2\nu+n+1)(2\nu+n+3)}{\sqrt{\pi}2^{\nu+n+4}(n+1)(n+4)(\nu+n+3)\Gamma(\nu+n+\frac{9}{2})}, \\
c_{\nu,n}&=\frac{2\nu+n+1}{\sqrt{\pi}2^{\nu+n+1}(n+1)(n+2)\Gamma(\nu+n+\frac{5}{2})}.
\end{align*}

\begin{theorem}\label{tiger1}Let $0<\gamma<1$ and $n>-1$. Then, for all $x>0$,
\begin{align}\label{bi1}\int_0^x \frac{\mathbf{L}_\nu(t)}{t^\nu}\,\mathrm{d}t&>\frac{\mathbf{L}_\nu(x)}{x^\nu}-\frac{x}{\sqrt{\pi}2^\nu\Gamma(\nu+\frac{3}{2})}, \quad \nu>-\tfrac{3}{2}, \\
\label{bi2}\int_0^x \frac{\mathbf{L}_{\nu+n}(t)}{t^\nu}\,\mathrm{d}t&>\frac{\mathbf{L}_{\nu+n+1}(x)}{x^\nu}-a_{\nu,n}x^{n+2}, \quad \nu>-\tfrac{1}{2}(n+1), \\
\label{bi3}\int_0^x \frac{\mathbf{L}_{\nu+n}(t)}{t^\nu}\,\mathrm{d}t&<\frac{2(\nu+n+1)}{n+1}\frac{\mathbf{L}_{\nu+n+1}(x)}{x^\nu}-\frac{2\nu+n+1}{n+1}\frac{\mathbf{L}_{\nu+n+3}(x)}{x^\nu}\nonumber \\
&\quad+b_{\nu,n}x^{n+4}-c_{\nu,n}x^{n+2}, \quad \nu>-\tfrac{1}{2}(n+1), \\
\label{bi4}\int_0^x \mathrm{e}^{-\gamma t}\frac{\mathbf{L}_\nu(t)}{t^\nu}\,\mathrm{d}t&>\frac{1}{1-\gamma}\bigg(\mathrm{e}^{-\gamma x}\int_0^x\frac{\mathbf{L}_\nu(t)}{t^\nu}\,\mathrm{d}t-\frac{1-(1+\gamma x)\mathrm{e}^{-\gamma x}}{\sqrt{\pi}\gamma2^\nu\Gamma(\nu+\frac{3}{2})}\bigg), \quad \nu>-\tfrac{3}{2}, \\
\label{bi5}\int_0^x \mathrm{e}^{-\gamma t}\frac{\mathbf{L}_\nu(t)}{t^\nu}\,\mathrm{d}t&>\frac{1}{1-\gamma}\bigg(\mathrm{e}^{-\gamma x}\frac{\mathbf{L}_{\nu}(x)}{x^\nu}-\frac{(1+\gamma x)(1-\mathrm{e}^{-\gamma x})}{\sqrt{\pi}\gamma2^\nu\Gamma(\nu+\frac{3}{2})}\bigg), \quad \nu>-\tfrac{3}{2}.
\end{align}
We have equality in (\ref{bi2}) and (\ref{bi3}) if $\nu=-\frac{1}{2}(n+1)$.  Inequalities (\ref{bi1})--(\ref{bi5}) are tight as $x\rightarrow\infty$ and inequality (\ref{bi3}) is also tight as $x\downarrow0$.   

Now suppose that $\nu>-\frac{1}{2}(n+1)$, and let 
\begin{equation*}D_{\nu,n}:=\sup_{x>0}\frac{x^\nu}{\mathbf{L}_{\nu+n}(x)}\int_0^x \frac{\mathbf{L}_{\nu+n}(t)}{t^\nu}\,\mathrm{d}t.
\end{equation*}
The existence of $D_{\nu,n}$ is guaranteed by inequalities (\ref{bi3}) and (\ref{Imon}), and we have $D_{\nu,n}<2(\nu+n+1)$.  Suppose also that $0<\gamma<\frac{1}{D_{\nu,n}}$.  Then, for all $x>0$,
\begin{align}\label{bi7}\int_0^x \mathrm{e}^{-\gamma t}\frac{\mathbf{L}_{\nu+n}(t)}{t^\nu}\,\mathrm{d}t&<\frac{\mathrm{e}^{-\gamma x}}{1-D_{\nu,n}\gamma}\int_0^x\frac{\mathbf{L}_{\nu+n}(t)}{t^\nu}\,\mathrm{d}t, \\
\int_0^x \mathrm{e}^{-\gamma t}\frac{\mathbf{L}_{\nu+n}(t)}{t^\nu}\,\mathrm{d}t&< \frac{\mathrm{e}^{-\gamma x}}{1-D_{\nu,n}\gamma}\bigg(\frac{2(\nu+n+1)}{n+1}\frac{\mathbf{L}_{\nu+n+1}(x)}{x^\nu} \nonumber \\
\label{bi8}&\quad-\frac{2\nu+n+1}{n+1}\frac{\mathbf{L}_{\nu+n+3}(x)}{x^\nu}+b_{\nu,n}x^{n+4}-c_{\nu,n}x^{n+2} \bigg).
\end{align}
\end{theorem}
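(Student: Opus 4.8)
\section*{Proof proposal}

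The plan is to reduce the whole theorem to two exact integral identities coming from (\ref{Iidentity})--(\ref{diffone}), and then to run integration-by-parts arguments against them. First I would record the companion of (\ref{diffone}), namely $\frac{\mathrm{d}}{\mathrm{d}x}(x^{\mu}\mathbf{L}_{\mu}(x))=x^{\mu}\mathbf{L}_{\mu-1}(x)$, which is immediate on differentiating the defining series term by term. Writing $\mathbf{L}_{\nu+n}(t)=t^{-(\nu+n+1)}\frac{\mathrm{d}}{\mathrm{d}t}(t^{\nu+n+1}\mathbf{L}_{\nu+n+1}(t))$, multiplying by $t^{-\nu}$ and integrating by parts over $[0,x]$ — the boundary contribution at $0$ vanishes because, by (\ref{Itend0}), $t^{-\nu}\mathbf{L}_{\nu+n+1}(t)=O(t^{n+2})$ and the new integrand is $O(t^{n+1})$, both integrable since $n>-1$ — gives
\begin{equation}\label{pA}
\int_0^x\frac{\mathbf{L}_{\nu+n}(t)}{t^\nu}\,\mathrm{d}t=\frac{\mathbf{L}_{\nu+n+1}(x)}{x^\nu}+(2\nu+n+1)\int_0^x\frac{\mathbf{L}_{\nu+n+1}(t)}{t^{\nu+1}}\,\mathrm{d}t .
\end{equation}
A second integration by parts — now of $\int_0^x t^{n}\cdot t^{-(\nu+n)}\mathbf{L}_{\nu+n}(t)\,\mathrm{d}t$, differentiating $t^{-(\nu+n)}\mathbf{L}_{\nu+n}(t)$ by (\ref{diffone}), then re-expanding $x\mathbf{L}_{\nu+n}(x)$ by (\ref{Iidentity}) and removing the leftover $\int_0^x t^{1-\nu}\mathbf{L}_{\nu+n+1}(t)\,\mathrm{d}t$ via (\ref{pA}) with $(\nu,n)$ replaced by $(\nu-1,n+2)$ — collapses, after the $\Gamma$-function bookkeeping, to
\begin{equation}\label{pB}
\int_0^x\frac{\mathbf{L}_{\nu+n}(t)}{t^\nu}\,\mathrm{d}t=\frac{2(\nu+n+1)}{n+1}\frac{\mathbf{L}_{\nu+n+1}(x)}{x^\nu}-c_{\nu,n}x^{n+2}-\frac{2\nu+n+1}{n+1}\int_0^x\frac{\mathbf{L}_{\nu+n+2}(t)}{t^\nu}\,\mathrm{d}t .
\end{equation}
Checking that the polynomial remainder in (\ref{pB}) is exactly $-c_{\nu,n}x^{n+2}$, and later that $(n+1)a_{\nu,n+2}=(2\nu+n+1)b_{\nu,n}$, is the one genuinely computational step.

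Given (\ref{pA})--(\ref{pB}), the $\gamma=0$ inequalities follow quickly. In (\ref{pA}) the last term is nonnegative when $\nu>-\tfrac{1}{2}(n+1)$ and vanishes (as does $a_{\nu,n}$) when $\nu=-\tfrac{1}{2}(n+1)$, which yields (\ref{bi2}) and its equality case. In (\ref{pB}) the last term is subtracted, so I would bound $\int_0^x t^{-\nu}\mathbf{L}_{\nu+n+2}(t)\,\mathrm{d}t$ from below by (\ref{bi2}) with $n$ replaced by $n+2$; because $(n+1)a_{\nu,n+2}=(2\nu+n+1)b_{\nu,n}$ this reproduces (\ref{bi3}) verbatim, again with equality at $\nu=-\tfrac{1}{2}(n+1)$. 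For (\ref{bi1}) I would instead integrate (\ref{diffone}) directly, obtaining $\int_0^x t^{-\nu}\mathbf{L}_{\nu+1}(t)\,\mathrm{d}t=x^{-\nu}\mathbf{L}_\nu(x)-x(\sqrt{\pi}\,2^\nu\Gamma(\nu+\tfrac{3}{2}))^{-1}$, so that (\ref{bi1}) amounts to $\int_0^x t^{-\nu}(\mathbf{L}_\nu(t)-\mathbf{L}_{\nu+1}(t))\,\mathrm{d}t>0$; this follows from the pointwise inequality $\mathbf{L}_\nu(t)>\mathbf{L}_{\nu+1}(t)$ for $t>0$, which is (\ref{Imon}) when $\nu\ge-\tfrac{1}{2}$ and requires a short separate argument (or a known refinement of (\ref{Imon})) on $-\tfrac{3}{2}<\nu<-\tfrac{1}{2}$. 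All the tightness assertions then follow by substituting the asymptotics (\ref{Itend0})--(\ref{Itendinfinity}) into the two sides of each inequality.

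For the weighted integrals I would use $\int_0^x\mathrm{e}^{-\gamma t}H'(t)\,\mathrm{d}t=\mathrm{e}^{-\gamma x}H(x)+\gamma\int_0^x\mathrm{e}^{-\gamma t}H(t)\,\mathrm{d}t$. With $H(t)=\int_0^t s^{-\nu}\mathbf{L}_\nu(s)\,\mathrm{d}s$, bounding $H(t)$ inside the last integral from below by (\ref{bi1}) produces a self-referential inequality for $\int_0^x\mathrm{e}^{-\gamma t}t^{-\nu}\mathbf{L}_\nu(t)\,\mathrm{d}t$; since $0<\gamma<1$ one may divide through by $1-\gamma>0$, and evaluating $\int_0^x t\,\mathrm{e}^{-\gamma t}\,\mathrm{d}t$ then produces (\ref{bi4}). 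Inserting (\ref{bi1}) once more into (\ref{bi4}) to replace $\int_0^x t^{-\nu}\mathbf{L}_\nu(t)\,\mathrm{d}t$ by $x^{-\nu}\mathbf{L}_\nu(x)$ minus a power, and simplifying (using $1+\gamma x\ge1$ at the final step), gives the cleaner form (\ref{bi5}).

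Finally, for (\ref{bi7}) I would take $H(t)=\int_0^t s^{-\nu}\mathbf{L}_{\nu+n}(s)\,\mathrm{d}s$; by the definition of $D_{\nu,n}$ one has $H(t)\le D_{\nu,n}\,t^{-\nu}\mathbf{L}_{\nu+n}(t)$ for every $t>0$, hence $\gamma\int_0^x\mathrm{e}^{-\gamma t}H(t)\,\mathrm{d}t\le D_{\nu,n}\gamma\int_0^x\mathrm{e}^{-\gamma t}t^{-\nu}\mathbf{L}_{\nu+n}(t)\,\mathrm{d}t$, and since $D_{\nu,n}\gamma<1$ the identity above rearranges to (\ref{bi7}); (\ref{bi8}) is then just (\ref{bi7}) followed by (\ref{bi3}). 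The finiteness of $D_{\nu,n}$ together with $D_{\nu,n}<2(\nu+n+1)$ follows on dividing (\ref{bi3}) by $x^{-\nu}\mathbf{L}_{\nu+n}(x)$, using (\ref{Imon}) to bound $\mathbf{L}_{\nu+n+1}(x)/\mathbf{L}_{\nu+n}(x)<1$, and noting that the remaining power-over-$\mathbf{L}_{\nu+n}$ ratios tend to $0$ as $x\downarrow0$ and as $x\to\infty$; retaining the negative terms of (\ref{bi3}) is what makes this bound hold when $-1<n<0$, and is the only other point needing care.
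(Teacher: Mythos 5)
Your proposal is correct in substance and keeps the paper's overall architecture, but your route to (\ref{bi2}) and (\ref{bi3}) is genuinely different. The paper proves (\ref{bi2}) by introducing the auxiliary function $u(x)=\int_0^x t^{-\nu}\mathbf{L}_{\nu+n}(t)\,\mathrm{d}t-x^{-\nu}\mathbf{L}_{\nu+n+1}(x)+a_{\nu,n}x^{n+2}$, showing $u'(x)>0$ from the derivative formula (\ref{num3}) together with the monotonicity (\ref{Imon}), and checking positivity as $x\downarrow0$; it then gets (\ref{bi3}) by integrating (\ref{num3}) over $(0,x)$ and inserting (\ref{bi2}) with $n$ replaced by $n+2$. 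You instead integrate by parts against $\frac{\mathrm{d}}{\mathrm{d}x}\big(x^{\mu}\mathbf{L}_{\mu}(x)\big)=x^{\mu}\mathbf{L}_{\mu-1}(x)$ to obtain the exact identity $\int_0^x t^{-\nu}\mathbf{L}_{\nu+n}(t)\,\mathrm{d}t=x^{-\nu}\mathbf{L}_{\nu+n+1}(x)+(2\nu+n+1)\int_0^x t^{-\nu-1}\mathbf{L}_{\nu+n+1}(t)\,\mathrm{d}t$, which yields (\ref{bi2}) (indeed a slightly stronger form, with no $-a_{\nu,n}x^{n+2}$ term) and the equality case at $\nu=-\frac{1}{2}(n+1)$ without invoking (\ref{Imon}) at that step; your second identity is precisely the one the paper obtains by integrating (\ref{num3}) (the remainder $-c_{\nu,n}x^{n+2}$ checks out), so (\ref{bi3}) then follows exactly as in the paper. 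The remaining parts coincide with the paper's proof: (\ref{bi1}) via $\mathbf{L}_{\nu}>\mathbf{L}_{\nu+1}$ and integration of (\ref{diffone}); (\ref{bi4}) and (\ref{bi5}) via $\int_0^x \mathrm{e}^{-\gamma t}H'(t)\,\mathrm{d}t=\mathrm{e}^{-\gamma x}H(x)+\gamma\int_0^x\mathrm{e}^{-\gamma t}H(t)\,\mathrm{d}t$ with (\ref{bi1}) inserted; (\ref{bi7}) and (\ref{bi8}) via the definition of $D_{\nu,n}$ and then (\ref{bi3}); tightness via (\ref{Itend0}) and (\ref{Itendinfinity}).

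Two corrections/cautions. First, the constant identity you single out as the computational step is written transposed: what is needed (and what is true) is $(2\nu+n+1)a_{\nu,n+2}=(n+1)b_{\nu,n}$, i.e.\ $\frac{2\nu+n+1}{n+1}a_{\nu,n+2}=b_{\nu,n}$; with this the bookkeeping for (\ref{bi3}) closes as you intend. Second, your reservation about (\ref{bi1}) on $-\frac{3}{2}<\nu<-\frac{1}{2}$ is well placed and is not resolved by the paper either, which simply cites (\ref{Imon}) although that inequality is stated only for $\nu\geq\frac{1}{2}$; note moreover that the pointwise inequality $\mathbf{L}_{\nu}(t)>\mathbf{L}_{\nu+1}(t)$ genuinely fails for $\nu$ close to $-\frac{3}{2}$ (for instance $\mathbf{L}_{-1/2}(1)>\mathbf{L}_{-3/2}(1)$), so the ``short separate argument'' you defer would have to work at the level of the integrals rather than pointwise. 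Likewise, your remark that establishing $D_{\nu,n}<2(\nu+n+1)$ for $-1<n<0$ needs care is warranted: since the ratio defining $D_{\nu,n}$ tends to $1$ as $x\rightarrow\infty$, the claim is delicate precisely when $2(\nu+n+1)$ is small, whereas the paper asserts it is immediate from (\ref{bi3}) and (\ref{Imon}). Neither point makes your argument weaker than the paper's; on both you are the more careful.
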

%  The constant $C_\nu=\frac{2(\nu+1)}{(2\nu+1)(1-\gamma)-\gamma}$ in inequality (\ref{bi6}) is best possible when $\gamma=0$, and $\lim_{\nu\rightarrow\infty}C_\nu=1/(1-\gamma)$, which is the best possible constant.
%The constants in the bounds (\ref{besi225}), (\ref{besi44}) and (\ref{besi55}) cannot be improved.  Inequalities (\ref{besi11}) and (\ref{besi44}) hold for all $\gamma>0$.  

\begin{proof}We first establish inequalities (\ref{bi1})--(\ref{bi8}) and then prove that the inequalities are tight in certain limits.

(i) From inequality (\ref{Imon}) we obtain
\begin{align*}\int_0^x \frac{\mathbf{L}_\nu(t)}{t^\nu}\,\mathrm{d}t>\int_0^x \frac{\mathbf{L}_{\nu+1}(t)}{t^\nu}\,\mathrm{d}t =\frac{\mathbf{L}_\nu(x)}{x^\nu}-\frac{x}{\sqrt{\pi}2^\nu\Gamma(\nu+\frac{3}{2})},
\end{align*}
where we used the differentiation formula (\ref{diffone}) and limiting form (\ref{Itend0}) to evaluate the integral.

(ii) The assertion that there is equality in (\ref{bi2}) and (\ref{bi3}) if $\nu=-\frac{1}{2}(n+1)$ can be seen from the fact the both these upper and lower bounds (which we now prove) are equal in this case.  We now suppose that $\nu>-\frac{1}{2}(n+1)$.  Consider the function
\begin{equation*}u(x)=\int_0^x \frac{\mathbf{L}_{\nu +n}(t)}{t^\nu}\,\mathrm{d}t-\frac{\mathbf{L}_{\nu+n+1}(x)}{x^\nu}+a_{\nu,n}x^{n+2}.
\end{equation*}
We argue that $u(x)>0$ for all $x>0$, which will prove the result.  We first note that from the differentiation formula (\ref{diffone}) followed by identity (\ref{Iidentity}) we have that
\begin{align}&\frac{\mathrm{d}}{\mathrm{d}x}\bigg(\frac{\mathbf{L}_{\nu+n+1}(x)}{x^\nu}\bigg)=\frac{\mathrm{d}}{\mathrm{d}x}\bigg(x^{n+1}\cdot\frac{\mathbf{L}_{\nu+n+1}(x)}{x^{\nu+n+1}}\bigg)\nonumber\\
&\quad=(n+1)\frac{\mathbf{L}_{\nu+n+1}(x)}{x^{\nu+1}}+\frac{\mathbf{L}_{\nu+n+2}(x)}{x^{\nu}}+\frac{x^{n+1}}{\sqrt{\pi}2^{\nu+n+1}\Gamma(\nu+n+\frac{5}{2})}\nonumber\\
&\quad=\frac{n+1}{2(\nu+n+1)}\bigg(\frac{\mathbf{L}_{\nu+n}(x)}{x^\nu}-\frac{\mathbf{L}_{\nu+n+2}(x)}{x^\nu}-\frac{x^{n+1}}{\sqrt{\pi}2^{\nu+n+1}\Gamma(\nu+n+\frac{5}{2})}\bigg)\nonumber\\
&\quad\quad+\frac{\mathbf{L}_{\nu+n+2}(x)}{x^\nu}+\frac{x^{n+1}}{\sqrt{\pi}2^{\nu+n+1}\Gamma(\nu+n+\frac{5}{2})}\nonumber \\
\label{num3}&\quad=\frac{n+1}{2(\nu+n+1)}\frac{\mathbf{L}_{\nu+n}(x)}{x^\nu}+\frac{2\nu+n+1}{2(\nu+n+1)}\frac{\mathbf{L}_{\nu+n+2}(x)}{x^\nu}+(n+2)a_{\nu,n}x^{n+1}.
\end{align}
Therefore
\begin{align*}u'(x)=\frac{2\nu+n+1}{2(\nu+n+1)}\bigg(\frac{\mathbf{L}_{\nu+n}(x)}{x^\nu}-\frac{\mathbf{L}_{\nu+n+2}(x)}{x^\nu}\bigg) >0,
\end{align*}
where we used (\ref{Imon}) to obtain the inequality.  Also, from (\ref{Itend0}), as $x\downarrow0$,
\begin{align*}u(x)&\sim \int_0^x \frac{t^{n+1}}{\sqrt{\pi}2^{\nu+n}\Gamma(\nu+n+\frac{3}{2})}\,\mathrm{d}t-\frac{x^{n+2}}{\sqrt{\pi}2^{\nu+n+1}\Gamma(\nu+n+\frac{5}{2})}+a_{\nu,n}x^{n+2}\\
&=\frac{x^{n+2}}{\sqrt{\pi}2^{\nu+n}(n+2)\Gamma(n+\nu+\frac{3}{2})} -\frac{x^{n+2}}{\sqrt{\pi}2^{\nu+n+1}\Gamma(\nu+n+\frac{5}{2})}+a_{\nu,n}x^{n+2}\\ 
&=\frac{x^{n+2}}{\sqrt{\pi}2^{\nu+n}\Gamma(\nu+n+\frac{3}{2})}\bigg(\frac{1}{n+2}-\frac{1}{2(\nu+n+\frac{3}{2})}\bigg)+a_{\nu,n}x^{n+2} >0,
\end{align*}
where the inequality can be seen to hold because $\nu>-\frac{1}{2}(n+1)$.  Thus, we conclude that $u(x)>0$ for all $x>0$, as required.

(iii) Integrating both sides of (\ref{num3}) over $(0,x)$, applying the fundamental theorem of calculus and rearranging gives
\begin{align*}\int_0^x \frac{\mathbf{L}_{\nu +n} (t)}{t^\nu}\,\mathrm{d}t &= \frac{2(\nu+n+1)}{n+1} \frac{\mathbf{L}_{\nu +n+1} (x)}{x^\nu} -\frac{2\nu+n+1}{n+1} \int_0^x \frac{\mathbf{L}_{\nu +n +2} (t)}{t^\nu}\,\mathrm{d}t \\
&\quad-\frac{2\nu+n+1}{n+1}\int_0^x\frac{t^{n+1}}{\sqrt{\pi}2^{\nu+n+1}\Gamma(\nu+n+\frac{5}{2})}\,\mathrm{d}t.
\end{align*}
Evaluating the second integral on the right hand-side of the above expression and using inequality (\ref{bi2}) to bound the first integral then yields (\ref{bi3}).

(iv) Let $\nu>-1$.   Then integration by parts and inequality (\ref{bi1}) gives
\begin{align*} \int_0^x \mathrm{e}^{-\gamma t} \frac{\mathbf{L}_\nu(t)}{t^\nu} \,\mathrm{d}t &= \mathrm{e}^{-\gamma x}\int_0^x \frac{\mathbf{L}_\nu(t)}{t^\nu}\,\mathrm{d}t + \gamma \int_0^x \mathrm{e}^{-\gamma t}\bigg(\int_0^t  \frac{\mathbf{L}_\nu(u)}{u^\nu} \,\mathrm{d}u\bigg) \,\mathrm{d}t \\
&> \mathrm{e}^{-\gamma x}\int_0^x \frac{\mathbf{L}_\nu(t)}{t^\nu}\,\mathrm{d}t + \gamma \int_0^x \mathrm{e}^{-\gamma t} \frac{\mathbf{L}_\nu(t)}{t^\nu}  \,\mathrm{d}t-\gamma \int_0^x \frac{t\mathrm{e}^{-\gamma t}}{\sqrt{\pi}2^\nu\Gamma(\nu+\frac{3}{2})} \,\mathrm{d}t,
\end{align*}
whence on evaluating $\int_0^xt\mathrm{e}^{-\gamma t} \,\mathrm{d}t=\frac{1}{\gamma^2}(1-(1+\gamma x)\mathrm{e}^{-\gamma x})$ and rearranging we obtain (\ref{bi3}).

(v) Apply inequality (\ref{bi1}) to inequality (\ref{bi4}).

(vi) We now prove inequality (\ref{bi7}); the assertion that $D_{\nu,n}<2(\nu+n+1)$ is immediate from inequalities (\ref{bi3}) and (\ref{Imon}).  Now, integrating by parts similarly to we did in part (iv), we have
\begin{align*}\int_0^x \mathrm{e}^{-\gamma t} \frac{\mathbf{L}_{\nu+n}(t)}{t^\nu} \,\mathrm{d}t &= \mathrm{e}^{-\gamma x}\int_0^x \frac{\mathbf{L}_{\nu+n}(t)}{t^\nu}\,\mathrm{d}t + \gamma \int_0^x \mathrm{e}^{-\gamma t}\bigg(\int_0^t  \frac{\mathbf{L}_{\nu+n}(u)}{u^\nu} \,\mathrm{d}u\bigg) \,\mathrm{d}t \\
&<\mathrm{e}^{-\gamma x}\int_0^x \frac{\mathbf{L}_{\nu+n}(t)}{t^\nu}\,\mathrm{d}t + D_{\nu,n}\gamma \int_0^x \mathrm{e}^{-\gamma t} \frac{\mathbf{L}_{\nu+n}(t)}{t^\nu}  \,\mathrm{d}t.
\end{align*}
As we assumed $0<\gamma<\frac{1}{D_{\nu,n}}$, on rearranging we obtain inequality (\ref{bi7}).

(vii) Apply inequality (\ref{bi3}) to inequality (\ref{bi7}).

(viii) Finally, we prove that inequalities (\ref{bi1})--(\ref{bi5}) are tight as $x\rightarrow\infty$ and inequality (\ref{bi3}) is also tight as $x\downarrow0$.  We begin by noting that a straightforward asymptotic analysis using (\ref{Itendinfinity}) gives that, for $0\leq\gamma<1$, $n>-\tfrac{3}{2}$ and $\nu\in\mathbb{R}$, 
\begin{equation}\label{eqeq1} \int_0^x \mathrm{e}^{-\gamma t}\frac{\mathbf{L}_{\nu+n}(t)}{t^\nu}\,\mathrm{d}t\sim \frac{1}{\sqrt{2\pi}(1-\gamma)}x^{-\nu-1/2}\mathrm{e}^{(1-\gamma)x}, \quad x\rightarrow\infty,
\end{equation}
and we also have
\begin{equation}\label{eqeq2}\mathrm{e}^{-\gamma x}\frac{\mathbf{L}_{\nu+n}(x)}{x^\nu}\sim  \frac{1}{\sqrt{2\pi}}x^{-\nu-1/2}\mathrm{e}^{(1-\gamma)x}, \quad x\rightarrow\infty.
\end{equation}
One can now readily check with the aid of (\ref{eqeq1}) and (\ref{eqeq2}) that inequalities (\ref{bi1})--(\ref{bi5}) are tight as $x\rightarrow\infty$.  

It now remains to prove that inequality (\ref{bi3}) is tight as $x\downarrow0$.  From (\ref{Itend0}), we have on the one hand, as $x\downarrow0$,
\begin{equation*}\int_0^x \frac{\mathbf{L}_{\nu+n}(t)}{t^\nu}\,\mathrm{d}t\sim\int_0^x \frac{t^{n+1}}{\sqrt{\pi}2^{\nu+n}\Gamma(\nu+n+\frac{3}{2})}\,\mathrm{d}t=\frac{x^{n+2}}{\sqrt{\pi}2^{\nu+n}(n+2)\Gamma(\nu+n+\frac{3}{2})},
\end{equation*}
and on the other,
\begin{align*}&\frac{2(\nu+n+1)}{n+1}\frac{\mathbf{L}_{\nu+n+1}(x)}{x^\nu}-\frac{2\nu+n+1}{n+1}\frac{\mathbf{L}_{\nu+n+3}(x)}{x^\nu}+b_{\nu,n}x^{n+2}+c_{\nu,n}x^{n+4} \\
&\quad \sim \frac{(\nu+n+1)x^{n+2}}{\sqrt{\pi}2^{\nu+n}(n+1)\Gamma(\nu+n+\frac{5}{2})}-\frac{(2\nu+n+1)x^{n+2}}{\sqrt{\pi}2^{\nu+n+1}(n+1)(n+2)\Gamma(\nu+n+\frac{5}{2})} \\
&\quad=\frac{(2(\nu+n+1)(n+2)-(2\nu+n+1))x^{n+2}}{\sqrt{\pi}2^{\nu+n+1}(n+1)(n+2)\Gamma(\nu+n+\frac{5}{2})} \\
&\quad=\frac{2(n+1)(\nu+n+\frac{3}{2})x^{n+2}}{\sqrt{\pi}2^{\nu+n+1}(n+1)(n+2)\Gamma(\nu+n+\frac{5}{2})} =\frac{x^{n+2}}{\sqrt{\pi}2^{\nu+n}(n+2)\Gamma(\nu+n+\frac{3}{2})},
\end{align*}
where we used that $u\Gamma(u)=\Gamma(u+1)$.  This proves the claim.
\end{proof}

\begin{remark}The constants $D_{\nu,n}$ can be computed numerically.  As an example, we used \emph{Mathematica} to find $D_{0,0}=1.109$, $D_{1,0}=1.331$ $D_{3,0}=1.693$, $D_{5,0}=1.990$ and $D_{10,0}=2.584$.
\end{remark}

% where we rounded up to the nearest decimal place.

\begin{remark}The upper bounds (\ref{bi7}) and (\ref{bi8}) are not tight in the limits $x\downarrow0$ and $x\rightarrow\infty$, but they are of the correct order in both limits ($O(x^{n+1})$ as $x\downarrow0$, and $O(x^{-\nu-1/2}\mathrm{e}^{(1-\gamma)x})$ as $x\rightarrow\infty$).  The bounds are simple but are not entirely satisfactory in that they only hold for $0<\gamma<\frac{1}{D_{\nu,n}}$, whereas one would like the inequalities to be valid for all $0<\gamma<1$.  It should be mentioned that a similar problem was encountered by \cite{gaunt ineq3} in that the upper bounds obtained for $\int_0^x \mathrm{e}^{-\gamma t} t^\nu I_\nu(t)\,\mathrm{d}t$ were only valid for $0<\gamma<\alpha_\nu$, for some $0<\alpha_\nu<1$.  
%The open problem below, which is analogous to Open Problem 2.10 of \cite{gaunt ineq3}, is considered by this author to be interesting.
\end{remark}

%\begin{open}\label{openprob1}Find a constant $M_{\nu,\gamma}>0$ such that, for all $x>0$,
%\begin{equation*}\label{bes1} \int_0^x \mathrm{e}^{-\gamma t}\frac{I_\nu(t)}{t^\nu}\,\mathrm{d}t<M_{\nu,\gamma}\mathrm{e}^{-\gamma x}\frac{I_{\nu+1}(x)}{x^\nu}, \quad \nu>-\tfrac{1}{2},\:0<\gamma<1.
%\end{equation*}
%\end{open}

We end by noting that one can combine the inequalities of Theorem \ref{tiger1} and the integral formula (\ref{besint6}) to obtain lower and upper bounds for a generalized hypergeometric function.  We give an example in the following corollary. 

%Here is an example.  It is interesting to note that the double inequality is remarkably accurate for large $x$ or large $\nu$.

\begin{corollary}\label{struvebessel}Let $\nu>\frac{1}{2}$. Then, for all $x>0$,
\begin{align}\mathbf{L}_{\nu}(x)-a_{\nu-1,0}x^{\nu+1}&<\frac{x^{\nu+1}}{\sqrt{\pi}2^{\nu}\Gamma(\nu+\frac{1}{2})}{}_2F_3\bigg(1,1;\frac{3}{2},2,\nu+\frac{1}{2};\frac{x^2}{4}\bigg)\nonumber \\
\label{dob22}&<2\nu \mathbf{L}_\nu(x)-(2\nu-1)\mathbf{L}_{\nu+2}(x)+b_{\nu-1,0}x^{\nu+3}-c_{\nu-1,0}x^{\nu+1}.
\end{align}
\end{corollary}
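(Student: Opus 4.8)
The plan is to derive Corollary \ref{struvebessel} by specialising Theorem \ref{tiger1} to the case $n=0$ with $\nu$ replaced by $\nu-1$, so that $\nu-1>-\frac12$, i.e. $\nu>\frac12$, as in the hypothesis. With this substitution, $\int_0^x t^{-(\nu-1)}\mathbf{L}_{\nu-1}(t)\,\mathrm{d}t$ is bounded below by (\ref{bi2}) and above by (\ref{bi3}); the left-hand Struve function appearing inside the integral is $\mathbf{L}_{\nu-1+0}=\mathbf{L}_{\nu-1}$, and the functions on the right are $\mathbf{L}_{\nu}$, $\mathbf{L}_{\nu+2}$ respectively (since $\nu-1+n+1=\nu$ and $\nu-1+n+3=\nu+2$), with the constants becoming $a_{\nu-1,0}$, $b_{\nu-1,0}$, $c_{\nu-1,0}$. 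So the skeleton is simply: quote (\ref{bi2}) and (\ref{bi3}) at $(\nu\mapsto\nu-1,\,n=0)$, and then identify the integral $\int_0^x t^{1-\nu}\mathbf{L}_{\nu-1}(t)\,\mathrm{d}t$ with the claimed ${}_2F_3$ expression via (\ref{besint6}).

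The one genuine computation is this identification. I would apply the integral formula (\ref{besint6}) with its $\nu$ replaced by $\nu-1$: this gives
\[
\int_0^x \frac{\mathbf{L}_{\nu-1}(t)}{t^{\nu-1}}\,\mathrm{d}t=\frac{x^{2}}{\sqrt{\pi}2^{\nu}\Gamma(\nu+\frac{1}{2})}{}_2F_3\bigg(1,1;\frac{3}{2},2,\nu+\frac{1}{2};\frac{x^2}{4}\bigg).
\]
Comparing with the middle term of (\ref{dob22}), which is $\dfrac{x^{\nu+1}}{\sqrt{\pi}2^{\nu}\Gamma(\nu+\frac12)}{}_2F_3(\cdots)$, I see these differ only by the monomial factor $x^{\nu-1}$: indeed $x^{\nu+1}=x^{\nu-1}\cdot x^2$. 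So the middle expression in (\ref{dob22}) equals $x^{\nu-1}\int_0^x t^{1-\nu}\mathbf{L}_{\nu-1}(t)\,\mathrm{d}t$. I should therefore check that multiplying (\ref{bi2}) and (\ref{bi3}) (in the specialised form) through by $x^{\nu-1}=x^{(\nu-1)}$ — equivalently, noting that the theorem's integrals already carry the $t^{-\nu}$ weight rather than $t^{-(\nu-1)}$ — the bounds line up. Concretely, in the corollary the integral is implicitly $x^{\nu-1}\int_0^x t^{-(\nu-1)}\mathbf{L}_{\nu-1}(t)\,\mathrm{d}t$, i.e. the theorem's left-hand side in (\ref{bi2})/(\ref{bi3}) with $\nu\mapsto\nu-1$ is $\int_0^x t^{-\nu}\mathbf{L}_{\nu-1}(t)\,\mathrm{d}t$, and multiplying that whole displayed inequality by $x^{\nu-1}$ converts $t^{-\nu}$-weighted integrals to $t^{-(\nu-1)}$-weighted ones and converts each $\mathbf{L}_{\mu}(x)/x^{\nu-1}$ term into $\mathbf{L}_{\mu}(x)$, and each $x^{m}$ term into $x^{m+\nu-1}$. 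Tracking exponents: $a_{\nu-1,0}x^{n+2}=a_{\nu-1,0}x^{2}$ becomes $a_{\nu-1,0}x^{\nu+1}$; $b_{\nu-1,0}x^{n+4}=b_{\nu-1,0}x^{4}$ becomes $b_{\nu-1,0}x^{\nu+3}$; $c_{\nu-1,0}x^{n+2}$ becomes $c_{\nu-1,0}x^{\nu+1}$. This is exactly the pattern of signs and powers in (\ref{dob22}).

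So the proof is short:

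\begin{proof}[Proof of Corollary \ref{struvebessel}] Let $\nu>\frac12$, so that $\mu:=\nu-1>-\frac12$. Applying inequalities (\ref{bi2}) and (\ref{bi3}) of Theorem \ref{tiger1} with $n=0$ and with $\nu$ there replaced by $\mu=\nu-1$ (which satisfies $\mu>-\frac12=-\frac12(n+1)$), we obtain
\[
\mathbf{L}_{\nu}(x)-a_{\nu-1,0}x^{2}<\int_0^x \frac{\mathbf{L}_{\nu-1}(t)}{t^{\nu-1}}\,\mathrm{d}t<2\nu\,\mathbf{L}_{\nu}(x)-(2\nu-1)\mathbf{L}_{\nu+2}(x)+b_{\nu-1,0}x^{4}-c_{\nu-1,0}x^{2}
\]
for all $x>0$, where we used that $\mu+n+1=\nu$, $\mu+n+3=\nu+2$, $\frac{2(\mu+n+1)}{n+1}=2\nu$ and $\frac{2\mu+n+1}{n+1}=2\nu-1$. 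On the other hand, the integral formula (\ref{besint6}) with $\nu$ replaced by $\nu-1$ gives
\[
\int_0^x \frac{\mathbf{L}_{\nu-1}(t)}{t^{\nu-1}}\,\mathrm{d}t=\frac{x^{2}}{\sqrt{\pi}2^{\nu}\Gamma(\nu+\frac{1}{2})}{}_2F_3\bigg(1,1;\frac{3}{2},2,\nu+\frac{1}{2};\frac{x^2}{4}\bigg).
\]
Inserting this expression for the integral into the double inequality above and then multiplying through by $x^{\nu-1}$ (which is positive) yields (\ref{dob22}), on noting that $x^{\nu-1}\cdot x^{2}=x^{\nu+1}$ and $x^{\nu-1}\cdot x^{4}=x^{\nu+3}$.
\end{proof}

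The only subtlety — and the single point worth double-checking carefully — is the bookkeeping in the last step: that multiplying the theorem's $t^{-\nu}$-weighted integral inequalities (at $\nu\mapsto\nu-1$) by $x^{\nu-1}$ produces exactly the $t^{-(\nu-1)}$-weighted integral against which (\ref{besint6}) is matched, and that the three constants and their monomial powers transform precisely into $a_{\nu-1,0}x^{\nu+1}$, $b_{\nu-1,0}x^{\nu+3}$, $c_{\nu-1,0}x^{\nu+1}$ with the signs shown. I expect no real obstacle here — it is pure substitution and exponent tracking — but it is the part where an off-by-one in the index shift would most easily creep in, so I would verify the identities $\mu+n+1=\nu$, $\mu+n+3=\nu+2$ and the coefficient reductions explicitly before finalising.
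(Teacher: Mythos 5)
Your proof is correct and takes essentially the same route as the paper's one-line proof: combine the integral formula (\ref{besint6}) with inequalities (\ref{bi2}) and (\ref{bi3}) at $n=0$, replace $\nu$ by $\nu-1$, and clear the factor $x^{\nu-1}$. The only blemish is a transcription slip in your displayed double inequality, where the specialized bounds should read $\mathbf{L}_{\nu}(x)/x^{\nu-1}-a_{\nu-1,0}x^{2}$ and $2\nu\,\mathbf{L}_{\nu}(x)/x^{\nu-1}-(2\nu-1)\mathbf{L}_{\nu+2}(x)/x^{\nu-1}+b_{\nu-1,0}x^{4}-c_{\nu-1,0}x^{2}$ before multiplying through by $x^{\nu-1}$; your surrounding exponent bookkeeping shows you intend exactly this, so the argument stands.
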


\begin{proof}Combine the integral formula (\ref{besint6}) and inequalities (\ref{bi2}) and (\ref{bi3}) (with $n=0$) of Theorem \ref{tiger1}, and replace $\nu$ by $\nu-1$. 
\end{proof}

\begin{remark}We know from Theorem \ref{tiger1} that the two-sided inequality (\ref{dob22}) is tight in the limit $x\rightarrow\infty$, and the upper bound is also tight as $x\downarrow0$.  To elaborate further, we denote by $F_\nu(x)$ the expression involving the generalized hypergeometric function in (\ref{dob22}), and the lower and upper bounds by $L_\nu(x)$ and $U_\nu(x)$.  We used \emph{Mathematica} to compute the relative error in approximating $F_\nu(x)$ by $L_\nu(x)$ and $U_\nu(x)$, and numerical results are given in Tables \ref{table1} and \ref{table2}.  We observe that, for a given $x$, the relative error in approximating $F_\nu(x)$ by either $L_\nu(x)$ or $U_\nu(x)$ increases as $\nu$ increases.  We also notice from Table \ref{table1} that, for a given $\nu$, the relative error in approximating $F_\nu(x)$ by $L_\nu(x)$ decreases as $x$ increases.  However, from Table \ref{table2} we see that, for a given $\nu$, as $x$ increases the relative error in approximating $F_\nu(x)$ by $U_\nu(x)$ initially increases before decreasing.  This is because the upper bound is tight as $x\downarrow0$. 

\begin{comment}
We now note the bound $\frac{I_{\nu+1}(x)}{I_\nu(x)}>\frac{x}{2(\nu+1)+x}$, $\nu>-1$, which is the simplest lower bound of a sequence of more complicated rational lower bounds given in \cite{nasell2}.  $\frac{\mathbf{L}_{\nu+1}(x)}{\mathbf{L}_\nu(x)}>\frac{x}{x+2\nu+3}$, $\nu>-1$  We thus obtain that the relative error in approximating $F_\nu(x)$ by either $L_\nu(x)$ or $U_\nu(x)$ is at most
\begin{align*}&\frac{2\nu \mathbf{L}_\nu(x)-(2\nu-1)\mathbf{L}_{\nu+2}(x)+b_{\nu-1,0}x^{\nu+3}-c_{\nu-1,0}x^{\nu+1}}{\mathbf{L}_\nu(x)-a_{\nu-1,0}x^{\nu+1}}-1\\
&\quad=(2\nu-1)\bigg(1-\frac{\mathbf{L}_{\nu+2}(x)}{\mathbf{L}_{\nu+1}(x)}\frac{\mathbf{L}_{\nu+1}(x)}{\mathbf{L}_{\nu}(x)}\bigg)+R_\nu(x) \\
&\quad<(2\nu-1)\bigg(1-\frac{x^2}{(x+2\nu+7)(x+2\nu+5)}\bigg)+R_\nu(x)\\
&\quad=\frac{(2\nu-1)((2\nu+5)(2\nu+7)+(4\nu+12)x)}{(x+2\nu+5)(x+2\nu+7)}+R_\nu(x),
\end{align*}
which, for fixed $x$, has rate $\nu$ as $\nu\rightarrow\infty$ and, for fixed $\nu$, has rate $x^{-1}$ as $x\rightarrow\infty$.  
\end{comment}

\begin{table}[h]
\centering
\caption{\footnotesize{Relative error in approximating $F_\nu(x)$ by $L_\nu(x)$.}}
\label{table1}
{\scriptsize
\begin{tabular}{|c|rrrrrrr|}
\hline
%& & & & & $\nu$ & & & & \\
 \backslashbox{$\nu$}{$x$}      &    0.5 &    5 &    10 &    25 &    50 &    100 & 250   \\
 \hline
1 & 0.4959 & 0.2540 & 0.1089 & 0.0409 & 0.0202 & 0.0101 & 0.0040  \\
2.5 & 0.7979 & 0.6225 & 0.3708 & 0.1539 & 0.0784 & 0.0396 & 0.0159  \\
5 & 0.8992 & 0.8229 & 0.6374 & 0.3130 & 0.1678  & 0.0869 & 0.0355 \\
7.5 & 0.9329 & 0.8923 & 0.7741 & 0.4407 & 0.2482 & 0.1318 & 0.0547  \\ 
10 & 0.9498 & 0.9249 & 0.8472  & 0.5426  &  0.3205   & 0.1745 & 0.0735 \\  
  \hline
\end{tabular}
}
\end{table}
\begin{table}[h]
\centering
\caption{\footnotesize{Relative error in approximating $F_\nu(x)$ by $U_\nu(x)$.}}
\label{table2}
{\scriptsize
\begin{tabular}{|c|rrrrrrr|}
\hline
%& & & & & $\nu$ & & & & \\
 \backslashbox{$\nu$}{$x$}      &    0.5 &    5 &    10 &    25 &    50 &    100 & 250   \\
 \hline
1 & 0.0041 & 0.1939 & 0.1981 & 0.1034 & 0.0558 & 0.0289 & 0.0118  \\
2.5 & 0.0070 & 0.5184 & 0.9270 & 0.6847 & 0.4073 & 0.2213 & 0.0930  \\
5 & 0.0062 & 0.5679 & 1.6268 & 2.0626 & 1.4411  & 0.8462 & 0.3721 \\
7.5 & 0.0051 & 0.4985 & 1.7368 & 3.4231 & 2.7983 & 1.7750 & 0.8169  \\ 
10 & 0.0043 & 0.4285 & 1.6301  & 4.5028  & 4.2818    & 2.9312 & 1.3959 \\  
  \hline
\end{tabular}
}
\end{table}
\end{remark}

\subsection*{Acknowledgements}
The author is supported by a Dame Kathleen Ollerenshaw Research Fellowship.  

\footnotesize

\end{document}